

\documentclass{amsart} 

\usepackage{amssymb}
\usepackage{comment}
\usepackage{amsmath}
\usepackage{latexsym}
\usepackage{amsbsy}
\usepackage{amsfonts}
\usepackage{hyperref}
\usepackage{graphicx}
\usepackage{enumerate}
\usepackage{color}

\usepackage{tikz}

\def\marginpar#1{\ignorespaces}

\newtheorem{theorem}{Theorem}[section]

\newtheorem{corollary}[theorem]{Corollary}

\numberwithin{equation}{section}
\makeatother
\newcommand{\TITLE}[1]{\renewcommand{\@TITLE}{#1}}
\newcommand{\SHORTTITLE}[1]{\renewcommand{\@SHORTTITLE}{#1}}
\newcommand{\DEDICATORY}[1]{\gdef\@DEDICATORY{#1}}

\newcommand{\KEYWORDS}[1]{\renewcommand{\@KEYWORDS}{#1}}
\newcommand{\AMSSUBJ}[1]{\renewcommand{\@AMSSUBJ}{#1}}
\newcommand{\AMSSUBJSECONDARY}[1]{\gdef\@AMSSUBJSECONDARY{#1}}
\newcommand{\ABSTRACT}[1]{\renewcommand{\@ABSTRACT}{#1}}
\newcommand{\VOLUME}[1]{\renewcommand{\@VOLUME}{#1}}
\newcommand{\PAPERNUM}[1]{\renewcommand{\@PAPERNUM}{#1}}
\newcommand{\YEAR}[1]{\renewcommand{\@YEAR}{#1}}
\newcommand{\PAGESTART}[1]{\renewcommand{\@PAGESTART}{#1}} 
\newcommand{\PAGEEND}[1]{\renewcommand{\@PAGEEND}{#1}} 
\newcommand{\SUBMITTED}[1]{\renewcommand{\@SUBMITTED}{#1}}
\newcommand{\ACCEPTED}[1]{\renewcommand{\@ACCEPTED}{#1}}
\newcommand{\DOI}[1]{\gdef\@DOI{10.1214/\@JOURNAL.#1}}
\newcommand{\ARXIVID}[1]{\gdef\@ARXIVID{#1}}
\newcommand{\HALID}[1]{\gdef\@HALID{#1}}

\def\EMAIL#1{E-mail:~\texttt{\href{mailto:#1}{\nolinkurl{#1}}}}

\renewcommand{\P}{\mathbb{P}}

\newcommand{\E}{\mathbb{E}}

\newcommand{\convd}{\overset{(d)}{{}\rightarrow{}}}

\newcommand{\giv}{\,|\,}  

\begin{document}
\title[Feller coupling of cycles of permutations]{Feller coupling of cycles of permutations and Poisson spacings in inhomogeneous Bernoulli trials}


\author[Joseph Najnudel]{{Joseph} Najnudel}
\address{School of Mathematics, University of Bristol, UK.}
\email{joseph.najnudel@bristol.ac.uk}
\author[Jim Pitman]{{Jim} Pitman}
\address{Statistics department, University of California, Berkeley, USA.} \email{pitman@stat.berkeley.edu}
\begin{abstract}
{Feller (1945) provided a coupling between the counts of cycles of various sizes in a uniform random
permutation of $[n]$ and the spacings between successes in a sequence of $n$ independent Bernoulli trials with success probability $1/n$ at the $n$th trial. 
Arratia, Barbour and Tavar\'e (1992) extended Feller's coupling, to associate cycles of random permutations governed by the Ewens $(\theta)$ distribution with 
spacings derived from independent Bernoulli trials with success probability $\theta/(n-1+\theta)$ at the $n$th trial, and to conclude that in an infinite sequence of such trials, the numbers of spacings of length $\ell$ are independent Poisson variables with 
means $\theta/\ell$.  
Ignatov (1978) first discovered this remarkable result in the uniform case $\theta = 1$, by constructing Bernoulli $(1/n)$ trials as the indicators of record  values in a sequence of i.i.d. uniform $[0,1]$ variables.
In the present article, the Poisson property of inhomogeneous Bernoulli spacings is explained by a variation of Ignatov's approach for a general $\theta >0$.
Moreover, our approach naturally provides random permutations of infinite sets whose cycle counts are exactly given by independent Poisson random variables.}
\end{abstract}
\maketitle

\section{Introduction}

In \cite{feller1945}, Feller introduces a coupling between the cycle structure of a uniformly distributed random permutation of order $n$ and the spacings between 
successes in a sequence of $n$ independent Bernoulli variables of parameters $1/n, 1/(n-1), \dots, 1/2, 1$. 
This coupling has been generalized to Ewens distributions for any parameter $\theta$: a recent discussion on this topic, with references to further work is provided by
Arratia, Barbour,  and Tavar\'{e}
\cite{MR3458588}, largely following their earlier work 
\cite{MR1177897}. Their coupling, for a general positive integer $n$ and $\theta >0$, may be constructed as follows. Consider a sequence $(B_i(\theta))_{1 \leq i \leq n}$ of independent Bernoulli variables, 
$B_i(\theta)$ with parameters $\theta/(\theta+ i -1)$. 
Conditionally on $(B_i(\theta))_{1 \leq i \leq n}$, construct the random permutation $\sigma$
of the set
 $[n]:= \{1,2, \dots, n\}$, as follows. First, define $X_1 := 1$, and then, recursively for $2 \leq i \leq n$: 
 \begin{itemize}
 \item If $B_{n+2-i}(\theta) = 1$, $X_i$ is the smallest element of $[n]$, different from $X_1, \dots, X_{i-1}$. 
 \item Conditionally on the fact that $B_{n+2-i}(\theta)= 0$, and on the values of $X_1, \dots, X_{i-1}$, the element $X_i$ is uniformly distributed on $[n] \backslash \{X_1, \dots, X_{i-1} \}$. 
 \end{itemize}
Then, the cycle structure of the permutation $\sigma$ is obtained by taking the subsequences of $(X_1, X_2, \dots, X_n)$, in such a way that 
the value $X_i$ is the start of a cycle if and only if $i = 1$ or $B_{n+2-i}(\theta) = 1$.

For example, suppose $n=9$, 
$$(B_1(\theta), B_2 (\theta), \dots, B_9(\theta)) = (1, 0, 1, 0, 0, 1, 1, 0, 0).$$
  A possible realization of the $X_i$'s is 
  $$(X_1, X_2, \dots, X_9) = (1,7,3,2,4,9,5,6,8).$$
  Since $B_3(\theta)$, $B_6(\theta)$ and $B_7(\theta)$ are equal to $1$, we have cycles starting at $X_8, X_5$ and $X_4$, and then 
$$\sigma = (173)(2)(495)(68).$$
Note that written in this fashion, each cycle starts with its minimal element, and the cycles are
written in increasing order of their minimal elements.

To indicate the parameters $n$ and $\theta$ used in this construction, let $\pi_{n, \theta}$ denote the random permutation $\sigma$ of $[n]$
so constructed.
Then $\pi_{n, \theta}$ 
follows the Ewens distribution
$$
\P( \pi_{n,\theta} = \pi ) = \frac{ \theta^{K(\pi)}}{ (\theta)_{n} } \mbox{ where } (\theta)_n:= \theta (\theta + 1) \cdots (\theta + n - 1)
$$
and $K(\pi)$ is the number of cycles of a permutation $\pi$ of $[n]$.
The proof of this fact is indicated in \cite{MR3458588} and appeals to Feller's original coupling
 of $B_1(1), \ldots, B_n(1)$ to a uniform random permutation $\pi_{n,1}$ for $\theta = 1$, and a simple change of measure argument for $\theta \ne 1$. 

The cycle structure of $\pi_{n, \theta}$ can be deduced from the spacings between the Bernoulli variables $B_i(\theta)$ which are equal to $1$. 
More precisely, for $\ell \ge 1$, let us say that an $\ell$-spacing occurs in a sequence $a_1, a_2, \ldots$ of $0$s and $1$s, starting at position $i-\ell$ and ending at
position $i$, if
$$
a_{i-\ell} \cdots a_i = 1\, 0^{\ell-1} \, 1
$$
meaning that the string of length $\ell + 1$ is a $1$ followed by $\ell - 1$ zeros followed by $1$.
If $C_{n,\ell}(\theta)$ is the number of $\ell$-spacings in
$$
B_1(\theta), \ldots, B_n(\theta),1, 0,0,0, \ldots
$$
then there is the equality
\begin{equation}
(C_{n,\ell}(\theta), 1 \le \ell \le n) = (K_{\ell}(\pi_{n,\theta}), 1 \le \ell \le n) 
\label{ckidx}
\end{equation}
where $K_{\ell}(\pi_{n,\theta})$ is 
the number of cycles of length $\ell$ in the permutation $\pi_{n,\theta}$. 

By regarding the sequence $(B_i(\theta))_{1 \leq i \leq n}$ as the first $n$ terms of an infinite sequence $(B_i(\theta))_{i \geq 1}$ of independent Bernoulli variables, 
 we get a coupling, on a single probability space, of the families of cycle lengths $(K_{\ell}(\pi_{n,\theta}))_{1 \le \ell \le n}$ for all values of $n$.  
We quickly deduce the following result by Arratia, Barbour and Tavar\'e, for which we provide a sketch of proof here for the reader's convenience in comparing with later arguments:
\begin{theorem} \label{11}
{\em Arratia, Barbour and Tavar\'e (\cite{MR1177897},
\cite{MR3458588}) }
 If $C_{\infty,\ell}(\theta)$ is the number of $\ell$-spacings in the infinite sequence $(B_i(\theta))_{i \geq 1}$ of Bernoulli variables, $B_i(\theta)$ having parameter $\theta/(\theta+ i -1)$,  then $(C_{\infty,\ell}(\theta))_{\ell \geq 1}$ is a sequence of independent Poisson$(\theta/\ell)$ variables.
\end{theorem}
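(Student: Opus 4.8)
The plan is to push the question onto the cycle counts of the finite permutations $\pi_{n,\theta}$ through the coupling \eqref{ckidx}, exploit the exact factorial-moment structure of the Ewens sampling formula to get a Poisson limit, and then pay off the difference between the finite and infinite spacing counts by a boundary estimate. By \eqref{ckidx} and the Ewens$(\theta)$ law of $\pi_{n,\theta}$ one has $(C_{n,\ell}(\theta))_{1\le\ell\le n}\ed(K_\ell(\pi_{n,\theta}))_{1\le\ell\le n}$. For the Ewens sampling formula it is classical that, for nonnegative integers $r_1,\dots,r_b$ with $\sum_{\ell=1}^{b}\ell\,r_\ell\le n$, the joint falling-factorial moments are
\[
\E\Big[\prod_{\ell=1}^{b} K_\ell(\pi_{n,\theta})\big(K_\ell(\pi_{n,\theta})-1\big)\cdots\big(K_\ell(\pi_{n,\theta})-r_\ell+1\big)\Big]=\prod_{\ell=1}^{b}\Big(\tfrac{\theta}{\ell}\Big)^{r_\ell},
\]
which are exactly the joint falling-factorial moments of independent Poisson$(\theta/\ell)$ variables. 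Letting $n\to\infty$ with $b$ and the $r_\ell$ fixed, each such moment converges to its independent-Poisson value, so by the method of moments $(K_\ell(\pi_{n,\theta}))_{1\le\ell\le L}\convd(Z_\ell)_{1\le\ell\le L}$ for every fixed $L$, with the $Z_\ell$ independent and $Z_\ell\sim$ Poisson$(\theta/\ell)$; the same limit then holds for $(C_{n,\ell}(\theta))_{1\le\ell\le L}$.

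Next I would compare $C_{n,\ell}(\theta)$ with $C_{\infty,\ell}(\theta)$. Writing $1=p_1<p_2<\cdots$ for the positions of the $1$s in the infinite sequence (infinitely many almost surely, since $\sum_i\theta/(\theta+i-1)=\infty$), we have $C_{\infty,\ell}(\theta)=\#\{k\ge1:p_{k+1}-p_k=\ell\}$, whereas $C_{n,\ell}(\theta)$ counts the \emph{internal} spacings $p_{k+1}-p_k=\ell$ with $p_{k+1}\le n$, together with the single spacing produced by the terminating $1$ at position $n+1$, namely $\ind[n+1-L_n=\ell]$, where $L_n$ is the position of the last $1$ among $B_1(\theta),\dots,B_n(\theta)$. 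The internal count increases monotonically to the almost surely finite total $C_{\infty,\ell}(\theta)$ (finiteness from $\E[C_{\infty,\ell}(\theta)]<\infty$), hence converges to it almost surely, while for fixed $L$ the boundary term is controlled by
\[
\P[\,n+1-L_n\le L\,]\le\sum_{i=n-L+1}^{n}\frac{\theta}{\theta+i-1}\longrightarrow 0\quad(n\to\infty),
\]
so the entire boundary contribution to $(C_{n,\ell}(\theta))_{1\le\ell\le L}$ vanishes with probability tending to $1$. Slutsky's lemma then yields $(C_{n,\ell}(\theta))_{1\le\ell\le L}\convd(C_{\infty,\ell}(\theta))_{1\le\ell\le L}$.

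Since one and the same sequence $(C_{n,\ell}(\theta))_{1\le\ell\le L}$ has been shown to converge both to $(Z_\ell)_{1\le\ell\le L}$ and to $(C_{\infty,\ell}(\theta))_{1\le\ell\le L}$, uniqueness of the limit in distribution forces $(C_{\infty,\ell}(\theta))_{1\le\ell\le L}\ed(Z_\ell)_{1\le\ell\le L}$; as $L$ is arbitrary and the finite-dimensional laws determine the law of the whole sequence, the theorem follows. I expect the boundary comparison to be the only delicate point. One is tempted to claim $C_{n,\ell}(\theta)\to C_{\infty,\ell}(\theta)$ almost surely, but this is false: $B_n(\theta)=1$ infinitely often (again because $\sum_i\theta/(\theta+i-1)=\infty$), and each such $n$ bumps $C_{n,1}(\theta)$ above the internal count via the terminal spacing. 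Only convergence in distribution survives, and it does so precisely because, for each fixed $n$, the terminal spacing exceeds any fixed length with probability close to $1$.
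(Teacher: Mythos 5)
Your proof is correct and follows essentially the same route as the paper's: transfer to the cycle counts of the Ewens permutation via \eqref{ckidx}, obtain the Poisson limit for the finite counts $(C_{n,\ell}(\theta))_{1\le\ell\le L}$, and show that the finite and infinite spacing counts agree with probability tending to one. The only difference is that you supply an explicit method-of-moments derivation of the Poisson limit from the Watterson factorial-moment formula where the paper simply cites this convergence, and your closing observation that almost-sure convergence of $C_{n,\ell}(\theta)$ to $C_{\infty,\ell}(\theta)$ fails is correct and consistent with the boundary term $1(J_n(\theta)=\ell)$ in the paper's inequality \eqref{coupling}.
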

\begin{proof}
If $L_n(\theta)$ is the position of the last $1$ in $(B_i(\theta))_{ 1 \le i \le n }$ and $J_n(\theta):= n+1 - L_n(\theta)$ is the last spacing in the finite $n$ scheme, then
\begin{equation}
\label{coupling}
C_{n,\ell}(\theta) \le C_{\infty,\ell}(\theta) + 1 ( J_n(\theta) = \ell ) \qquad (1 \le \ell \le n)
\end{equation}
with strict inequality iff there is an $\ell$-spacing in the infinite sequence $(B_j(\theta))_{j \geq 1}$ starting at $j = i -\ell$ and ending at $j = i > n$.
Now, this event and the event $\{J_n(\theta) = \ell \}$ have probability tending to zero when $n \rightarrow \infty$, so for fixed $\ell$,  $C_{n,\ell}(\theta) = C_{\infty,\ell}(\theta) $ with probability tending to one. On the other hand, by \eqref{ckidx}, for any fixed $K \geq 1$,  $(C_{n,\ell}(\theta), 1 \le \ell \le K)$ tends in law to independent  Poisson$(\theta/\ell)$ variables. The two last facts together imply the theorem. 
\end{proof}
Combining  \eqref{ckidx} and \eqref{coupling}, we get a coupling  of counts of small cycles 
$(K_\ell(\pi_{n,\theta}))_{1 \le \ell \le k}$ of a Ewens$(\theta)$ permutation to independent Poisson $(\theta/\ell)$ counts $(C_{\infty,\ell}(\theta))_{1 \le \ell \le k}$,
with a total variation error depending on $k$ and $\theta$ which is easily bounded explicitly. This implies in particular that 
\begin{equation}
\label{convd}
( K_\ell(\pi_{n,\theta}), 1 \le \ell \le k) \convd (C_{\infty,\ell}(\theta), 1 \le \ell \le k) \mbox{ as } n \to \infty
\end{equation}
for every fixed $k$, as well as estimates of total variation error in this approximation which are useful for $k = o(n)$:
see \cite[Theorems 1 and 3]{MR1177897}.
See also Sethuraman and Sethuraman \cite{MR2744276} 
for a review of studies of the distribution of the numbers of $\ell$-spacings in infinite sequences of independent Bernoulli trials with sequences of probabilities $p_i$ other than the sequence $p_i = \theta/(\theta + i -1)$
involved in this coupling with a sequence of Ewens$(\theta)$ permutations.

The coupling described above provides a way to define a sequence of Ewens$(\theta)$ random permutations $(\pi_{n, \theta})_{n \geq 1}$ whose cycle structures for different values of $n$ are strongly related: 
from $\pi_{n,\theta}$ to $\pi_{n+1,\theta}$, either a single fixed point is added,
or a single cycle of $\pi_{n,\theta}$ has its length increased by one.
However, the coupling above does not uniquely define a joint distribution for $\pi_{n, \theta}$ and $\pi_{n+1, \theta}$, because it does not say how
the content of the cycles of $\pi_{n+1, \theta}$ and $\pi_{n, \theta}$ are related.

In the particular case $\theta = 1$, when each $\pi_{n,1}$ is a uniform random element of the set
$\mathfrak{S}_n$ of permutations of $[n]$, Ignatov \cite{MR617405} provides a nice construction which defines the joint law of $(\pi_{n, 1})_{n \geq 1}$ in a unique way. 
Let $(U_i)_{i \geq 1}$ 
be
a sequence of pairwise distinct elements of $[0,1]$, with no smallest element. 
From this sequence, define the {\it lower record indices} $I_1 < I_2 < I_3 < \dots$, 
 as the set of indices $I$  such that $U_I$ is smaller than $U_i$ for all $i < I$, the {\it lower indicators} $(B_i)_{i \geq 1}$, given by $B_i = 1$ if $i$ is a lower record index 
 and by $B_i = 0$ otherwise, and the 
{\em inter-record stretches} $(V_k)_{k \geq 1}$ given by: 
\begin{equation}
V_k:= (U_{I_k}, U_{I_k+1}, \dots, U_{I_{k+1}-1}).
\end{equation}
We notice the following facts:
\begin{itemize}
\item the inter-record stretches are elements of the space $\cup_{\ell = 1}^\infty [0,1]^\ell$ of finite sequences in $[0,1]$ with undetermined length; 
\item the first term of the stretch $V_k$ is the $k$-th lower record value $R_k := U_{I_k}$;
\item this first term $R_k$ of $V_k$ is  the minimal term of the stretch $V_k$;
\item the length of the stretch $V_k$ is $I_{k+1} - I_k$, the $k$-th {\em inter-record spacing}.
\end{itemize}

We can then define, for all $n \geq 1$, a permutation $\pi^{U}_n$ of $\{U_1, \dots, U_n\}$ whose cycle structure is given by the 
inter-record stretches:  more precisely, $\pi^U_n (U_{i-1}) = U_{i}$ for all $i \in \{2, \dots, n\}$ which are not lower record indices, $\pi^U_n (U_{I_{k+1} - 1}) = U_{I_k}$ if 
$k \geq 1$ is such that $I_{k+1}  -1 \leq n$, and $\pi^U_n (U_n) = U_{I_j}$ where $I_j$ is the last lower record index such that $I_j \leq n$. 
The permutation $\pi^U_n$ acts on the set $\{U_1, \dots, U_n\}$: it induces a permutation $\pi_n$ of $[n]$ if we rename the $m$-th smallest element 
of this set by $m$ (for example, the permutation $0.2 \mapsto 0.9$, $0.4 \mapsto 0.5$, $0.5 \mapsto 0.2$, $0.9 \mapsto 0.4$ induces the permutation 
$1 \mapsto 4$, $2 \mapsto 3$, $3 \mapsto 1$, $4 \mapsto 2$). 
It is not difficult to check that the permutation $\pi_n$ depends only on the relative order of $U_1, \dots, U_n$, in a way which induces a bijective map
from $\mathfrak{S}_n$ to itself. This bijection was proposed by R\'enyi 
\cite{MR0286162} in the early 60s, and called the ``transformation fondamentale'' in a paper by Foata and Sch\"utzenberger  \cite{MR0272642}, in a more general setting of combinatorics on words.
Diaconis and Pitman \cite{diac-pit86} exploited this bijection to obtain the convergence in distribution \eqref{convd} in the case $\theta = 1$,
with a total variation bound. 
This bound was sharpened and extended to the case of a general parameter $\theta >0$ in \cite{MR1177897}, as indicated above.
But this argument for general $\theta$ loses track of the full Poisson structure of the record process for $\theta = 1$.

Let us recall how this Poisson structure for $\theta = 1$ was first exposed by Ignatov \cite{MR617405}.
If $(U_i)_{i \geq 1}$ is a sequence of i.i.d., uniform variables on $[0,1]$, then for all $n \geq 1$, all possible orders of $U_1, \dots, U_n$ occur with the same probability and 
then $\pi_n$ is uniformly distributed on $\mathfrak{S}_n$. On the other hand, the lower record indicators $(B_i)_{i \geq 1}$ are independent, Bernoulli variables, $B_i$ having parameter $1/i$. 
The link between the construction of $\pi_n$ and the Feller coupling is the following: conditionally on $(B_i)_{i \geq 1}$, 
the distribution of $\pi_n$ is uniform on the set of permutations whose lengths of the cycles, ordered by increasing lowest element, are equal  to the successive spacings between 
the $1$'s in the sequence $(1,B_n, B_{n-1}, \dots, B_1)$. One easily deduces the following result: given $(B_i)_{1 
\leq i \leq n}$, the
conditional distribution of $\pi_n$  is the same as that given by the Feller coupling procedure
using $B_i(1) = B_i$ for all $i$.
The Poisson structure obtained by taking all the inter-record stretches together is quite remarkable:

\begin{theorem} 
\label{thm:ignatov}
If the variables $(U_i)_{i \geq 1}$ are i.i.d., uniform in $[0,1]$, then the inter-record stretches $(V_k)_{k \geq 1}$ form a Poisson point process on $\cup_{\ell = 1}^\infty [0,1]^\ell$ with
mean measure
\begin{equation}
\label{mudef}
\mu (\bullet) = \sum_{\ell = 1}^\infty \frac{ P_\ell (\bullet) } { \ell }
\end{equation}
where $P_\ell(\bullet)$ is the conditional
distribution of $(U_1, \ldots, U_\ell)$ given that $U_1 < U_i$ for every $1 < i \le \ell$.
\end{theorem}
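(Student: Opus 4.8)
The plan is to pin down the law of the stretch process $(V_k)_{k\ge 1}$ through its Laplace functional, exploiting the strong Markov (renewal) structure of the lower records. The single fact that drives everything is a regeneration identity: conditionally on $R_k=r$, the part of the sequence read from position $I_k$ onward splits into a stretch $V_k=(r,F_1,\dots,F_M)$, where $M$ is geometric with $\mathbb P(M=j)=(1-r)^j r$ and the fillers $F_1,F_2,\dots$ are i.i.d.\ uniform on $(r,1]$, together with the next record $R_{k+1}$, uniform on $[0,r)$; here $V_k$ and $R_{k+1}$ are independent given $R_k=r$, and, conditionally on $R_{k+1}=r'$, the remainder of the sequence is an independent copy of the construction started from first record $r'$. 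In particular the record values form a Poisson process on $(0,1]$ of intensity $dr/r$ (because $-\log R_k$ is a rate-one Poisson process), each record $r$ carrying a conditionally independent mark---its filler content---whose law depends only on $r$.

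Conceptually this already gives the result via the Poisson marking theorem: $\{(R_k,\text{fillers}_k)\}$ is a Poisson process, and $(V_k)$ is its image under the bijection $(r,\vec x)\mapsto(r,\vec x)$, hence Poisson. Its intensity is obtained by transporting the marked intensity $\frac{dr}{r}\,K(r,(j,d\vec x))=\frac{dr}{r}\,r\,\mathbf 1\{\vec x\in(r,1]^j\}\,d\vec x$ to the stretch $(r,\vec x)\in[0,1]^{j+1}$, giving Lebesgue measure restricted to $\{x_1<x_i,\ 2\le i\le \ell\}$ on each $[0,1]^\ell$. Since $\{U_1=\min(U_1,\dots,U_\ell)\}$ has probability $1/\ell$, the conditional law satisfies $P_\ell/\ell=\mathbf 1\{x_1<x_i,\ i\ge 2\}\,d\vec x$, so the pushforward intensity is exactly $\mu=\sum_\ell P_\ell/\ell$ (consistently, its mass on length-$\ell$ stretches is $1/\ell$, matching Theorem~\ref{11} at $\theta=1$).

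To avoid invoking the marking theorem on this nonstandard mark space, and to handle the accumulation of records near $0$, I would instead verify the Laplace functional $L(f):=\mathbb E[\exp(-\sum_k f(V_k))]$ directly. It suffices to treat bounded $f\ge 0$ that vanish when the smallest coordinate is below some $\varepsilon>0$, since $\mu$ is finite (mass $\log(1/\varepsilon)$) on $\{\min\ge \varepsilon\}$ and such sets exhaust the space. Writing $G(r)=\mathbb E[\exp(-\sum_k f(V_k))\mid R_1=r]$ and $\psi(r)=\mathbb E[e^{-f(V_1)}\mid R_1=r]=r\sum_{j\ge 0}\int_{(r,1]^j}e^{-f(r,\vec x)}\,d\vec x$, the regeneration identity gives $G(r)=\psi(r)\,\frac1r\int_0^r G(s)\,ds$. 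Setting $H(r)=\frac1r\int_0^r G$ turns this into $rH'(r)=(\psi(r)-1)H(r)$; using $H(0^+)=1$ (valid since $\psi\equiv 1$ and $G\equiv 1$ on $(0,\varepsilon)$, as no stretch then meets the support of $f$) and the fact that $R_1$ is uniform, one gets $L(f)=\int_0^1 G=H(1)=\exp\big(-\int_0^1\frac{1-\psi(r)}{r}\,dr\big)$. A direct computation identifies $\int_0^1\frac{1-\psi(r)}{r}\,dr$ with $\int(1-e^{-f})\,d\mu$, which is precisely the Poisson Laplace functional and proves the claim.

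The main obstacle is the rigorous justification of the regeneration step: one must show that, given $R_1=r$, the first stretch $V_1$ is independent of $R_2$ with the stated geometric/uniform law, and that conditionally on $R_2$ the process restarts as an independent copy. This is where the i.i.d.\ structure of $(U_i)$ and the memorylessness of the geometric waiting time enter, and it must be phrased as a genuine strong Markov property of the record process rather than a heuristic ``restart''. A secondary point is to confirm that test functions $f$ vanishing below some $\varepsilon$ are rich enough to determine the law of the point process on the $\sigma$-finite space $\cup_\ell[0,1]^\ell$, which follows by letting $\varepsilon\downarrow 0$ together with a standard monotone-class argument.
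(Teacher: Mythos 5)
Your proposal is correct, and it actually contains two proofs. The conceptual route in your first two paragraphs is essentially the paper's argument: the authors quote Ignatov's result (via Resnick) that the record values, or rather the pairs $(R_k, I_{k+1}-I_k)$, form a Poisson process, and then invoke the ``spraying'' (marking) property --- given all the points, the stretch contents are conditionally independent with a law depending only on $(R_k, I_{k+1}-I_k)$ --- to upgrade to a Poisson process of full stretches; your observation that the marked intensity $\frac{dr}{r}\cdot r\,\mathbf 1\{\vec x\in(r,1]^j\}\,d\vec x$ is Lebesgue measure on $\{x_1<x_i\}$, hence equals $P_{j+1}/(j+1)$, is exactly the intensity identification the paper leaves implicit. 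Your third paragraph is a genuinely different, self-contained verification: the renewal equation $G(r)=\psi(r)\,\frac1r\int_0^r G$, the ODE $rH'=(\psi-1)H$ with $H\equiv 1$ below $\varepsilon$, and the identity $\int_0^1\frac{1-\psi(r)}{r}\,dr=\int(1-e^{-f})\,d\mu$ are all correct (note $1=r\sum_j\int_{(r,1]^j}d\vec x$ makes the last step a one-line Fubini), and they deliver the Poisson Laplace functional without citing either Ignatov's theorem or the marking theorem. What this buys is independence from external references at the price of the regeneration lemma you flag at the end; that lemma is the strong Markov property of the i.i.d.\ sequence at the stopping times $I_k$ together with the elementary geometric/uniform decomposition of the excursion above the current record, and is routine to write out. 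Your restriction to test functions supported on $\{\min\ge\varepsilon\}$, followed by $\varepsilon\downarrow 0$, correctly handles the $\sigma$-finiteness of $\mu$ and the accumulation of records at $0$.
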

To illustrate the notation:
\begin{itemize} 
\item $P_1(\bullet)$ is the uniform distribution of $U_1$ on $[0,1]$,
\item $P_2(\bullet)$ is uniform on $ \{ (u_1,u_2): u_1 < u_2 \} \subseteq [0,1]^2$, that is
the conditional distribution of $(U_1,U_2)$ given the event $(U_1 < U_2)$ of probability $1/2$.
\item $P_3(\bullet)$ is uniform on $ \{ (u_1,u_2,u_3): u_1 < \min( u_2, u_3 ) \} \subseteq [0,1]^3$, that is
the conditional distribution of $(U_1,U_2,U_3)$ given the event $(U_1 < \min ( U_2, U_3))$ of probability $1/3$,
\end{itemize} 
and so on.

Theorem \ref{thm:ignatov} is a straightforward extension  of the result of 
Ignatov \cite{MR617405}  that $\{ (R_k, I_{k+1} - I_k ), k \ge 1 \}$ is the collection of points of a Poisson point process on $(0,1) \times \{1,2, \ldots \}$
with mean number of points in $(s,1] \times \{ \ell \}$  equal to  $(1-s)^\ell /\ell$.    See Resnick \cite[Proposition 4.1 (iv)] {MR900810} for a
proof of this result using the basic spraying property of Poisson processes \cite[Proposition 3.8] {MR900810}.  The same spraying argument
gives the stronger assertion of Theorem \ref{thm:ignatov}. For it is easily seen that given all the  points $\{ (R_k ,I_{k+1} - I_k ), k \ge 1 \}$,  
for each particular $k$, the conditional distribution of the stretch $V_k$ depends only on $R_k$ and $I_{k+1} - I_k$, and given $R_k = r$ and $I_{k+1} - I_k  = \ell$, the stretch $V_k$ with initial term $r$ and length $\ell$ has the distribution of $(U_1, \ldots, U_\ell)$ given
$U_1 = r$ and  $r < U_i $ for all $1 < i \le \ell$. 

If we only consider the length of the inter-record stretches, we immediately deduce from Theorem \ref{thm:ignatov} that 
the inter-record spacings $(I_{k+1} - I_k)_{k \geq 1}$, form a Poisson point process on the positive integers, with intensity $1/\ell$ at $\ell$, meaning that the random variables
\begin{equation}
\label{kinfl}
K_{\infty,\ell}:= \sum_{k=1}^\infty 1 ( I_{k+1} - I_k = \ell)
\end{equation}
are independent Poisson variables with means $1/\ell$. This results corresponds to the case $\theta =1 $ of Theorem  \ref{11}, since the lower record indicators $(B_i)_{i \geq 1}$ are independent, Bernoulli variables, $B_i$ having parameter $1/i$.

Regarded as a fact about inhomogeneous Bernoulli trials, this result
is not at all obvious without a broader context involving additional randomization, such as Ignatov's context of record sequences, or
the context of the Feller coupling for random permutations.

The link between the Feller coupling and the lower records of a sequence of random variables can be extended to the setting of Ewens distributed permutations with general parameter $\theta > 0$, 
by changing the distribution of the sequence $(U_i)_{i \geq 1}$.
We will prove the following result: 
\begin{theorem} 
\label{thm:np}
For  $\theta >0$, let  $\P_\theta$ be the probability measure on the set of infinite sequences  in $[0,1]$, endowed with its Borel $\sigma$-algebra, such that 
for  $(U_i)_{i \geq 1}$ following the law $\P_{\theta}$: 
\begin{itemize}
\item The first term $U_1$ is Beta distributed with parameters $\theta$ and $1$. 
\item Conditionally on $(U_1, \dots, U_n)$, for $\min(U_1, \dots, U_n) = r$, the distribution of $U_{n+1}$ is the mixture with weights
$r$ and $1-r$ of the distribution of $r$ times a Beta variable with parameters $\theta$ and $1$, and the uniform distribution on $[r,1]$, i.e.
$$ \frac{\P_\theta( U_{n+1} \in du  \giv \min(U_1, \dots, U_n)   = r )}{du} = \theta \, \left( \frac{u}{r} \right)^{\theta - 1} 1(u < r) + 1 (u \ge r) \qquad ( 0 < u < 1 ).$$
\end{itemize}
In particular, no matter what $\theta >0$, the conditional probability of a new lower record at time $n+1$, given $(U_1, \dots, U_n)$, is always 
$\min(U_1, \dots, U_n)$. 

Then, the following statements hold:
\begin{itemize}
\item The finite dimensional distributions of $(U_1, \ldots, U_n)$ under $\P_\theta$ are absolutely continuous with respect to the
Lebesgue measure on $[0,1]^n$, 
with density
\begin{equation}
\label{rnd1}
\frac{d \P_\theta}{ d \P_1} (u_1, \ldots, u_n) =  \theta^{K_n} \min(u_{1}, \dots, u_n)^{\theta -1} 
\end{equation}
where $K_n$ is the number of lower records in the sequence $(u_1, \ldots, u_n)$. In particular, under $\P_1$, the variables $(U_i)_{i \geq 1}$ are i.i.d., uniform on $[0,1]$.
\item If   $(U_i)_{i \geq 1}$ follows the law $\P_{\theta}$, then  this sequence has a.s. no smallest element, the $U_i$'s are pairwise distinct, and 
the inter-record stretches $(V_k)_{k \geq 1}$ form a Poisson point process on $\cup_{\ell = 1}^\infty [0,1]^\ell$ with
mean measure $\theta \mu(\bullet)$ for $\mu(\bullet)$ as in \eqref{mudef}.
\end{itemize}

\end{theorem}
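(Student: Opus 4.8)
The plan is to establish the two bullets in turn, using the explicit finite-dimensional densities \eqref{rnd1} as the bridge between $\P_\theta$ and $\P_1$, and then transferring Ignatov's Theorem \ref{thm:ignatov} from $\P_1$ to $\P_\theta$ by a change of measure localized to record values above a threshold. First I would prove \eqref{rnd1} by induction on $n$. For $n=1$, $U_1\sim\mathrm{Beta}(\theta,1)$ has density $\theta u_1^{\theta-1}$, while $K_1=1$ and $\min=u_1$, matching the formula. For the inductive step, the prescribed conditional law of $U_{n+1}$ given $(U_1,\dots,U_n)$ with $\min=r$ has density $\theta(u/r)^{\theta-1}\mathbf 1(u<r)+\mathbf 1(u\ge r)$ against Lebesgue measure, whereas under $\P_1$ the conditional density is $1$; multiplying the inductive density by this ratio and distinguishing the cases $u_{n+1}\ge r$ (no new record, $K$ and $\min$ unchanged) and $u_{n+1}<r$ (new record, $K\mapsto K+1$, $\min\mapsto u_{n+1}$) reproduces $\theta^{K_{n+1}}\min(u_1,\dots,u_{n+1})^{\theta-1}$. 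Writing $D_n:=\theta^{K_n}\min(U_1,\dots,U_n)^{\theta-1}$ and letting $\mathcal F_n=\sigma(U_1,\dots,U_n)$, the same case analysis gives $\E_{\P_1}[D_{n+1}\mid\mathcal F_n]=D_n$, so $(D_n)$ is a nonnegative $\P_1$-martingale of mean $1$; this both confirms Kolmogorov consistency of the prescription and identifies $D_n$ as $d\P_\theta/d\P_1$ on $\mathcal F_n$. Setting $\theta=1$ makes the conditional density $\equiv1$ and $U_1$ uniform, giving the i.i.d.\ uniform statement.

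Two qualitative facts follow before the main computation. Absolute continuity on every finite marginal, together with the fact that $\P_1$ charges no diagonal, gives pairwise distinctness $\P_\theta$-a.s. For the absence of a smallest element I would argue directly from the conditional description: the chance of a new record at step $n+1$ is $\min(U_1,\dots,U_n)=:r_n$, and, given that a record occurs, the mixture in the statement forces the new record value to be $r_n\beta$ with $\beta\sim\mathrm{Beta}(\theta,1)$ independent of the past. Hence if some level $s>0$ were never crossed, the per-step record probability would stay $\ge s$, so a record would occur a.s.\ infinitely often, and the successive record values would be $R_1\prod_k\beta_k$ with i.i.d.\ factors satisfying $\E\log\beta=-1/\theta<0$, forcing $R_k\to0$ and contradicting $R_k\ge s$. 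Thus $R_k\downarrow0$, there is a.s.\ no smallest element, and for every $s\in(0,1)$ the first index $T_s:=\min\{n:U_n<s\}$ and the number $N_s$ of records exceeding $s$ are finite a.s.

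The heart of the proof is a localized change of measure. Fix $s\in(0,1)$; since records strictly decrease and $R_{N_s+1}=U_{T_s}<s$, the stretches with record value $>s$ are exactly $V_1,\dots,V_{N_s}$, a measurable function of $(U_1,\dots,U_{T_s-1})$. For any bounded functional $F$ of this finite configuration I would decompose on $\{T_s=n\}$, where $N_s=K_{n-1}$ and $U_n$ is the $(N_s+1)$-st record, so that $D_n=\theta^{N_s+1}U_n^{\theta-1}$ there; using $d\P_\theta/d\P_1=D_n$ on $\mathcal F_n$, integrating out the independent uniform $U_n$ over $(0,s)$ (contributing $\int_0^s u^{\theta-1}\,du=s^\theta/\theta$), and recognizing that the resulting sum is the analogous $\{T_s=n\}$-decomposition of $\E_{\P_1}[\theta^{N_s}s^{\theta-1}F]$, yields
\[
\E_{\P_\theta}[F]=\E_{\P_1}\!\big[\theta^{N_s}\,s^{\theta-1}\,F\big].
\]
Now under $\P_1$, Theorem \ref{thm:ignatov} says the stretches form a Poisson point process with mean measure $\mu$, whose restriction to $\{\text{first coordinate}>s\}$ is Poisson with finite total mass $\mu(\{>s\})=\sum_\ell(1-s)^\ell/\ell=-\log s$; and for a finite mean measure $\nu$, the law of a Poisson process of intensity $\theta\nu$ has density $\theta^{M}\exp((1-\theta)\nu(E))$ relative to intensity $\nu$, where $M$ is the number of points and $\nu(E)$ the total mass. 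With $\nu=\mu|_{>s}$ this density equals $\theta^{N_s}e^{(1-\theta)(-\log s)}=\theta^{N_s}s^{\theta-1}$, matching the displayed identity exactly. Hence under $\P_\theta$ the stretches with record value $>s$ form a Poisson process with mean measure $\theta\,\mu|_{>s}$. Letting $s\downarrow0$, and using that these restrictions are consistent and that a.s.\ every record value is positive, the full stretch configuration under $\P_\theta$ is a Poisson point process with mean measure $\theta\mu$ as in \eqref{mudef}, as claimed.

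The main obstacle, and the step deserving the most care, is this matching of Radon--Nikodym derivatives. Because $\mu$ has infinite total mass the global change of measure is singular, so one is forced to localize above a level $s$; one must then track the stopping time $T_s$ correctly, so that integrating out the first sub-$s$ value replaces the random factor $\min(U_1,\dots,U_{T_s-1})^{\theta-1}$ by the clean deterministic $s^{\theta-1}$; and finally one must justify that the consistent family of Poisson restrictions to $\{\text{first coordinate}>s\}$ glues, as $s\downarrow0$, into a single Poisson point process of intensity $\theta\mu$.
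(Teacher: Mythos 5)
Your proposal is correct and follows essentially the same route as the paper: induction for the density \eqref{rnd1}, localization above a level $s$ via the stopping time $T_s$ to obtain the identity $\E_{\P_\theta}[F]=s^{\theta-1}\E_{\P_1}[\theta^{N_s}F]$, recognition of this as the exponential tilting of a finite Poisson process (which the paper carries out via the count-plus-i.i.d.-marks decomposition rather than quoting the Radon--Nikodym formula, an equivalent computation), and letting $s\downarrow 0$. The only additions are your explicit martingale/consistency check and the record-value argument for the absence of a smallest element, both of which the paper treats as immediate from the definition.
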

The fact that $(U_i)_{i \geq 1}$ are i.i.d., uniform   under $\P_1$ is a restatement of Ignatov's Theorem \ref{thm:ignatov}.
 The description of the law of 
   $(U_1, \ldots, U_n)$ under $\P_\theta$ for general $\theta$  has already been indicated by Kerov and Tsilevich 
\cite[Lemma 2]{ktstick}, with upper rather than lower records, which exchanges $U_i$ with $1-U_i$ in the formulas. 
Kerov and Tsilevich have also associated random permutations to sequences following the distribution $\P_{\theta}$, and these permutations are distributed with respect to the Ewens measure of parameter $\theta$. However, the construction of \cite{ktstick} does not coincide with the construction given in the present paper 

The fact that $\P_\theta$ may also be described as in Theorem \ref{thm:np}, by simply changing the mean intensity measure 
of the Poisson point process of inter-record stretches on $\cup_{\ell = 1}^\infty[0,1]^\ell$ by a scalar factor of $\theta$,  from $\mu(\bullet)$ under $\P_1$ to $\theta \mu(\bullet)$ under $\P_\theta$, does not seem to have been observed before. 

The push forward of this result, from the Poisson point process of inter-record stretches 
to the Poisson point process of their lengths, gives the fact that the counting of the 
 inter-record spacings $(I_{k+1} - I_k)_{k \geq 1}$ forms a Poisson point process on the positive integers, with intensity $\theta/\ell$ at $\ell$. 
 The following corollary links  Theorem \ref{thm:np} to the Feller coupling and explains how Theorem \ref{thm:np} implies Theorem \ref{11}.
 \begin{corollary} \label{crl:feller}
For a sequence $(U_i)_{i \ge 1}$ of pairwise distinct elements of $[0,1]$, with no smallest element,
let $(B_i)_{i \geq 1}$ be the corresponding lower record indicators, let
$\pi_n^U$ be the permutation of  $\{U_1, \ldots, U_n \}$ whose cycle structure is given by the inter-record stretches, and let $\pi_n$ be the
corresponding permutation of $[n]$, with the same cycle structure.
Then,  for $(U_i)_{ i \ge 1}$ governed by the law $\P_\theta$,
\begin{itemize}
\item The $B_i$ are independent Bernoulli $(\theta/(i-1+ \theta))$;
\item Conditionally on all the $B_i$, the permutation $\pi_n$ is uniformly distributed 
among the permutations whose cycle lengths, ordered by increasing lowest elements, are given by the successive spacings between $1$'s in the sequence $(1, B_n, B_{n-1}, \dots, B_1)$;
\item Given $(B_i)_{1 
\leq i \leq n}$, the
conditional distribution of $\pi_n$  is the same as that given by the Feller coupling procedure
using $B_i(\theta) = B_i$ for all $i$;
\item The unconditional distribution of $\pi_n$ is Ewens with parameter $\theta$.
\end{itemize}
\end{corollary}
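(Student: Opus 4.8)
\emph{Strategy.} The plan is to read off all four bullets from the Radon--Nikodym density \eqref{rnd1}, reducing everything to one elementary integral. Write $\rho\in\mathfrak{S}_n$ for the relative order (pattern of ranks) of $(U_1,\dots,U_n)$. By the transformation fondamentale recalled in the introduction, $\rho\mapsto\pi_n$ is a bijection of $\mathfrak{S}_n$ onto itself under which the lower records of $\rho$ become the cycle starts of $\pi_n$; in particular the number $K_n$ of lower records equals the number of cycles $K(\pi_n)$, and the inter-record spacings are the cycle lengths, listed by increasing minimal element of the cycle. The decisive step is to compute the law of $\rho$ under $\P_\theta$. Fixing $\rho$, the event $\{\text{relative order}=\rho\}$ is carried by the sorting map onto the simplex $\{0<v_1<\cdots<v_n<1\}$, on which $\min(U_1,\dots,U_n)=v_1$; since $K_n$ is constant on this event, \eqref{rnd1} will give
\begin{equation}
\label{eq:rholaw}
\P_\theta(\text{relative order}=\rho)=\theta^{K_n(\rho)}\int_{0<v_1<\cdots<v_n<1}v_1^{\theta-1}\,dv_1\cdots dv_n=:\theta^{K_n(\rho)}\,c_n(\theta),
\end{equation}
where the constant $c_n(\theta)$ does not depend on $\rho$.

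From \eqref{eq:rholaw} the last two bullets fall out immediately. Summing over $\rho$ and using $\sum_{\rho\in\mathfrak{S}_n}\theta^{K_n(\rho)}=(\theta)_n$ (records and cycles being equidistributed) forces $c_n(\theta)=1/(\theta)_n$, whence $\P_\theta(\pi_n=\pi)=\theta^{K(\pi)}/(\theta)_n$, which is the Ewens statement of the fourth bullet. For the second bullet, I would note that the record pattern $B=(B_1,\dots,B_n)$ is a function of $\rho$ and that $K_n$ is determined by $B$; conditioning \eqref{eq:rholaw} on $B$ then leaves the conditional law of $\rho$ \emph{uniform} over all orderings with that record pattern. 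Through the bijection $\rho\mapsto\pi_n$ these are precisely the permutations whose cycle lengths, read by increasing minimal element, equal the successive inter-record spacings.

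The first bullet then follows by summing \eqref{eq:rholaw} over all orderings sharing a fixed pattern $b$. The number $N(b)$ of such orderings is read off from the case $\theta=1$: since $\P_1(\text{relative order}=\rho)=1/n!$ and the $B_i$ are independent Bernoulli$(1/i)$ under $\P_1$, one has $N(b)=n!\,\P_1(B=b)=n!\prod_{i:b_i=1}i^{-1}\prod_{i:b_i=0}(i-1)/i$. Combining this with $c_n(\theta)=1/(\theta)_n$ and $n!/(\theta)_n=\prod_{i=1}^n i/(\theta+i-1)$, the factors telescope to
\begin{equation}
\label{eq:Blaw}
\P_\theta(B=b)=N(b)\,\theta^{\sum_i b_i}\,c_n(\theta)=\prod_{i:b_i=1}\frac{\theta}{\theta+i-1}\prod_{i:b_i=0}\frac{i-1}{\theta+i-1},
\end{equation}
exhibiting $(B_i)_{1\le i\le n}$ as independent Bernoulli$(\theta/(\theta+i-1))$; letting $n\to\infty$ and invoking Kolmogorov consistency gives the first bullet for the whole sequence.

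Finally, the third bullet is a matching statement. The Feller coupling of the introduction, conditioned on $(B_i(\theta))_{1\le i\le n}$, likewise produces a permutation uniform among those with the prescribed cycle-length profile, since its only non-deterministic choices (the positions $X_i$ when $B_{n+2-i}(\theta)=0$) are made uniformly among the remaining elements; as the second bullet identifies the conditional law of $\pi_n$ given $B$ as the same uniform law on the same set, the two procedures agree once $B_i(\theta)=B_i$. I expect the genuine difficulty to be combinatorial bookkeeping rather than probabilistic: one must verify that the inter-record spacings, ordered by increasing minimal element of the corresponding cycle, coincide \emph{as an ordered list} with the spacings between consecutive $1$'s in the reversed, padded sequence $(1,B_n,\dots,B_1)$ --- the same dictionary already used in \eqref{ckidx}. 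Everything else drops out of the single integral \eqref{eq:rholaw}.
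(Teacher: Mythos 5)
Your proof is correct and follows essentially the same route as the paper's: both arguments rest on the factorization of the density \eqref{rnd1} into a function $\theta^{K_n}$ of the relative order and a function $\min(U_1,\dots,U_n)^{\theta-1}$ of the order statistics, yielding the tilted Bernoulli law of the records, the conditional uniformity of the relative order given the record pattern, and the identification with the Feller procedure. The only differences are cosmetic: you obtain the Bernoulli parameters and the Ewens law by explicit normalization ($c_n(\theta)=1/(\theta)_n$ and the count $N(b)=n!\,\P_1(B=b)$), where the paper argues via exponential tilting of independent Bernoulli$(1/i)$ variables and cites the classical Feller coupling for the Ewens conclusion.
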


Another thing we can remark is that all the values of $\theta > 0$ can be coupled on a single probability space. Indeed, under $\mathbb{P}_{\theta}$, the family of inter-record stretches forms a Poisson point
process of intensity $\theta \mu(\bullet)$, so can be constructed simultaneously for all $\theta$ by taking the points of a Poisson process of intensity equal to the product of Lebesgue measure on $\mathbb{R}_+$
by the measure $\mu(\bullet)$, and extracting the points for which the  $\mathbb{R}_+$  coordinate is smaller than $\theta$. Such a coupling provides a dynamic version of the Feller coupling, with the parameter 
$\theta$ of the Ewens measure as its time parameter. The path structure of this $\mathfrak{S}_n$-valued process $(\pi_{n,\theta}, \theta \ge 0)$ can be understood as follows. It may be constructed with right-continuous
step function paths, in which each jump involves insertion of a new cycle of some length $\ell$ from $1$ to $n$, corresponding to a Poisson point which is a sequence in 
some component $[0,1]^k$ of the sequence space with $k \ge \ell$, whose initial term is greater than the initial term of at least one sequence contributing to the current permutation of $[n]$.
This insertion may delete some cycles, and/or shorten  the final cycle, depending on the rank of the initial term of the new sequence relative to the initial terms associated with existing cycles.
It does not seem easy to give a full probabilistic description of the dynamics of this $\mathfrak{S}_n$-valued process.
In particular, it may not be Markovian, due to the latent initial terms of the sequential fragments which determine the order of the cycles.  
As the partition of $n$ induced by $\pi_{n,\theta}$ is not necessarily refining as $\theta$ increases, this process is not the same as the 
evolution described by Gnedin and Pitman \cite{MR2351686}, in which partitions following the Ewens $(\theta)$ distribution are constructed for all values of $\theta > 0$ to be refining as $\theta$ increases.
 
Theorem \ref{thm:np} and  Corollary \ref{crl:feller} are proven in Section \ref{thmnp} of the present article. 
In Section \ref{infinite}, we use the measure $\mathbb{P}_{\theta}$ in order to construct some infinite random permutations, in  a way which generalizes the Feller coupling.
In Section \ref{lloyd}, we provide a link between our construction and a result by Shepp and Lloyd on the cycle counts of permutations of random order.

\section{Proof of Theorem \ref{thm:np} and  Corollary \ref{crl:feller} } \label{thmnp}
By induction on $n$, using the definition of  $\P_{\theta}$, we see that the 
density at $(u_1, \dots, u_n)$ of the law of $(U_1, \dots, U_n)$ under $\P_{\theta}$ is given by 
$$\theta  u_1^{\theta-1} \prod_{2 \leq j \leq n \,: \, u_j = \min (u_1, \dots, u_j)}  \left[ \theta \left( \frac{ \min(u_1, \dots, u_j)}{\min(u_1, \dots, u_{j-1})} \right)^{\theta-1} \right]
= \theta^{K_n} (\min(u_1, \dots, u_n))^{\theta-1},$$
which proves the first statement of the theorem. 
It is also clear from the definition that  $(U_i)_{i \geq 1}$ has a.s. no smallest element and all elements pairwise distinct. 

Now, for a given $s \in (0,1)$, $n \geq 0$ and $u_1, \dots, u_n \in (s,1)$, we easily check that 
\begin{align*}
& \frac{\P_{\theta} (U_1 \in du_1, \dots, U_n \in du_n, U_{n+1} \leq s)}{du_1 \dots du_n} \\
& = \theta^{K_n}  (\min(u_1, \dots, u_n))^{\theta-1}  \int_0^s  \left[ \theta \left( \frac{u}{\min(u_1, \dots, u_{n})} \right)^{\theta-1} \right] du
\\ & = \theta^{K_n} \int_0^s \theta u^{\theta -1} du = \theta^{K_n} s^{\theta}.
\end{align*}
Let $(\mathcal{F}_i)_{i \geq 0}$ be the filtration generated by the variables $(U_i)_{i \geq 1}$, and let $T_s$ be the first index $i$ such that $U_i \leq s$: it is 
clear that $T_s$ is a stopping time with respect to $(\mathcal{F}_i)_{i \geq 0}$. 
The equality above shows that for any event $A_n$ which is $\mathcal{F}_n$-measurable, 
$$\P_{\theta} (A_n, T_s = n+1) = s^{\theta-1} \mathbb{E}_{\mathbb{P}_1} [ 1(A_n, T_s = n+1) \, \theta^{K_n} ]$$
where $K_n$ is the number of lower records in the sequence $(U_1, \dots, U_n)$. 
Now, let $E$ be an event which is measurable with respect to the family of all inter-record stretches starting above the level $s$, and let $L$ be the 
total length of these stretches. We can check that for $n \geq 0$, the intersection of $E$ and the event $\{L = n\}$ can be written as the intersection of 
$A_n$ and $T_s = n+1$ for some $\mathcal{F}_n$-measurable event $A_n$, which gives 
$$\P_{\theta} (E, L=n) = s^{\theta-1} \mathbb{E}_{\mathbb{P}_1} [ 1(E, L=n) \, \theta^{N_s} ]$$
where $N_s$ is the number of inter-record stretches starting above the level $s$. 
Hence
$$\P_{\theta} (E) = s^{\theta-1} \mathbb{E}_{\mathbb{P}_1} [ 1(E) \, \theta^{N_s} ],$$
which implies the following: 
\begin{itemize}
\item The law of $N_s$ under $\mathbb{P}_{\theta}$ has density $\theta^{N_s} s^{\theta-1}$ with respect to the law of $N_s$ under $\mathbb{P}_1$. 
\item Conditionally on $N_s$, the set of inter-record stretches starting above the level $s$ has the same law under $\mathbb{P}_{\theta}$ and under $\mathbb{P}_1$. 
\end{itemize}
By Theorem \ref{thm:ignatov}, the law of $N_s$ under $\mathbb{P}_1$ is the  Poisson distribution with parameter 
$$\sum_{\ell = 1}^{\infty} \frac{P_{\ell} ([s, 1]^{\ell})}{\ell} = \sum_{\ell=1}^{\infty} \frac{(1-s)^{\ell}}{\ell} = - \log s$$
and we deduce that the law of $N_s$ under $\mathbb{P}_{\theta}$ is the Poisson distribution of parameter $-\theta \log s$. 
Moreover, conditionally on $N_s$, under $\mathbb{P}_1$, and then also under $\mathbb{P}_{\theta}$, the family of inter-record stretches starting above $s$ has the same law as 
the family of elements of an  i.i.d. sequence of variables which are distributed according to the 
probability measure: 
$$-\frac{1}{\log s} \sum_{\ell} \frac{(P_{\ell} )_{| [s,1]^{\ell}} } {\ell}.$$
Hence, under $\mathbb{P}_{\theta}$, the inter-record stretches starting above $s$  form a Poisson point process 
with intensity 
$$\theta \sum_{\ell} \frac{(P_{\ell} )_{| [s,1]^{\ell}} } {\ell}.$$
Since $s \in (0,1)$ can be arbitrarily chosen, we get the second statement of Theorem \ref{thm:np}.

 For the corollaries, we use the following key property: the density on $\mathcal{F}_n$ of $\mathbb{P}_{\theta}$ with respect to $\mathbb{P}_1$ can 
 be written as the product of a function of the relative order of $(U_1, \dots, U_n)$, i.e.  $\theta^{K_n}$, and a function of the order statistics 
 of $(U_1, \dots, U_n)$, i.e. $(\min(U_1, \dots, U_n))^{\theta-1}$.  
 Since the relative order and the order statistics of $(U_1, \dots, U_n)$ are independent under $\mathbb{P}_1$, they remain independent under $\mathbb{P}_{\theta}$ for all $\theta > 0$. 
Moreover, under $\P_1$, the record indicators $B_i$ are independent Bernoulli$(1/i)$ variables, and the change of measure on these variables when we go from  $\mathbb{P}_{1}$
to $\mathbb{P}_{\theta}$  corresponds to a density factor proportional to
$$
\theta^{K_n} = \prod_{i=1}^n \theta^{B_i}.
$$
This easily implies that under $\P_\theta$, the record indicators are independent Bernoulli$(p_i(\theta))$, where
$$
p_i (\theta) = \frac{ \theta/i} { 1 - 1/i + \theta/i } = \frac{ \theta}{ i - 1 + \theta} .
$$
This gives the first item of Corollary \ref{crl:feller}. 
Moreover, conditionally on the  lower record indicators and the order statistics of $(U_1, \dots, U_n)$, all the possible 
relative orders of $(U_1, \dots, U_n)$ have the same probability, because of the form of the density of $\mathbb{P}_{\theta}$ with respect to $\mathbb{P}_1$. 
This  implies the second item of Corollary \ref{crl:feller}, since
the permutation $\pi_n$ is uniquely determined by the relative order of $(U_1, \dots, U_n)$. 
The third item of Corollary \ref{crl:feller} is a direct consequence of the two first items, and the last item is due to the classical properties of the Feller coupling. 

\section{Infinite permutations} \label{infinite}
From any sequence $(U_i)_{i \geq 1}$ of elements in $[0,1]$, with distinct values and  no smallest element, we have seen how to construct 
a permutation $\pi^U_n$ of $\{U_1, \dots, U_n\}$ and a permutation $\pi_n$ of $[n]$ from the inter-record stretches. 
It is also possible to define a permutation $\pi^U_{\infty}$ of the infinite set $\{U_i, i \geq 1\}$, in such a way that the cycles are given by the set of all inter-record stretches, 
i.e. $\pi^U_{\infty} (U_{i-1}) = U_i$ for all $i \geq 2$  which are not lower record indices, and $\pi^U_{\infty} (U_{I_{k+1} - 1}) = U_{I_k}$ for all $k \geq 1$. 
One easily checks that $\pi^U_{\infty}$ coincides with $\pi^U_n$ on the set $\{U_1, \dots, U_{n-1}\}$ for all $n \geq 1$. 

The construction of $\pi^U_{\infty}$ can be seen as some kind of Feller coupling of infinite order, since the construction of $\pi^U_n$ and  $\pi_n$ can be related with the Feller coupling 
of order $n$, as we have seen previously. 
However, we observe that contrary to the case of the permutation $\pi_n$ which acts on the fixed set $[n]$, the infinite  set on which    $\pi^U_{\infty}$ acts is itself a random set. 
Moreover,  we observe that the cycles of $\pi_{\infty}$ appear in decreasing order of their smallest element, i.e. in the reverse order with respect to the usual description of the Feller coupling.
If we look at the sequence of permutations $(\pi^U_n)_{n \geq 1}$ we get a coupling of permutations of different orders, which
has the property noted in the analysis of \cite[\S 3]{MR1177897}, that 
``the cycles are built and completed one by one, in contrast to the Chinese Restaurant Process'',
with reference to the alternative construction of cycle-consistent random permutations of $[n]$ discussed in
\cite[\S 2]{MR1177897}, 
and \cite{MR2245368}. 

If the sequence of variables $(U_i)_{i \geq 1}$ is distributed like $\mathbb{P}_{\theta}$, then by  Theorem \ref{thm:np}, 
 the cycle structure of $\pi^U_{\infty}$ is directly given by a Poisson point process on $\cup_{\ell =1}^{\infty} [0,1]^{\ell}$
 with mean measure $\theta \mu(\bullet)$. In particular, the number of cycles of different lengths $\ell$ is given by independent Poisson random variables of parameter $\theta/\ell$, which generalizes the case $\theta = 1$ studied by 
 Ignatov.  

If we consider, as at the end of the introduction,  the dynamical version of our construction, where all the values of $\theta > 0$ are coupled together, 
then the evolution of the cycle structure of $\pi^U_{\infty}$ when $\theta$ varies is easy to describe in terms of Poisson processes, contrary to the case where we consider permutations of finite order. 
In particular, the set of cycles of the permutation corresponding to $\theta = \theta_1 + \theta_2$ has the same law as the union of two independent sets of cycles, corresponding to the parameters $\theta = \theta_1$ and $\theta = \theta_2$.

\section{Connection with work by Shepp and Lloyd}
\label{lloyd}

In  this  section,  we  connect  Theorem \ref{thm:np}  to  a model for a random permutation $\pi$ of a set of random size $N$, first introduced by the work of Shepp  and  Lloyd \cite{MR0195117}   on
the distribution of the lengths of the longest and shortest cycles of a uniform random permutation.  In the Shepp and Lloyd model, $N$ is assigned the geometric $(p)$ distribution $\P( N \ge n) = (1-p)^n$
for $n \ge 0$.  In a following paper \cite{MR0391216}, Balakrishnan, Sankaranarayanan, and Suyambulingom extended the model of Shepp and Lloyd to a much more general model of random permutations $\pi$ of a set of random size $N$.
For a particular choice of parameters, which was not singled out for special discussion in  \cite{MR0391216}, the model of  \cite{MR0391216} assigns $N$ a negative binomial distribution, as in the following Corollary, and given $N= n$ the
permutation $\pi$ is governed by the Ewens$(\theta)$ distribution. See also  \cite{fristedt87}, \cite{hoppe2008faa}, \cite{Hansen} (Lemma 2.1). \cite{Watterson} (Theorem 2) and \cite{Joyce} for variants of this result with different interpretations,
and further references.
\begin{corollary} \label{41}
{\em (\cite{MR0195117}, \cite{MR0391216}, \cite{fristedt87}, \cite{hoppe2008faa}) } 
Let $\theta >0$ and $p \in (0,1)$, and  let
$N(\theta, p)$ denote a random variable with the negative binomial $(\theta, p)$ distribution:
$$
\P( N(\theta , p) = n ) = \frac{ (\theta)_n }{n!} (1-p)^n p^\theta 
$$
for $n \geq 1$, which implies that $\E N(\theta, p) = \theta (1-p)/p$. 
Let $\pi$ be a permutation of random order, such that conditionally 
 given $N(\theta, p) = n$, $\pi$  has order $[n]$ and is distributed according to the Ewens$(\theta)$ measure. 
Then the number of cycles of $\pi$ of different orders $\ell$ are independent Poisson variables of parameter $(1-p)^\ell/\ell$.
\end{corollary}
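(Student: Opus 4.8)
The plan is to realize the Shepp--Lloyd permutation directly inside the record construction of Theorem \ref{thm:np}. I would take $(U_i)_{i \ge 1}$ governed by $\P_\theta$, fix the threshold $s = p$, let $T_s$ be the first index with $U_i \le s$, set $L := T_s - 1$, and let $\pi_L$ be the permutation of $[L]$ obtained (after renaming by rank) from the inter-record stretches whose initial term lies above $s$. Because the records decrease and $U_{T_s}\le s$ is itself a new record, these stretches are precisely the complete cycles $V_1,\dots,V_{N_s}$ and they exactly tile $\{U_1,\dots,U_L\}$; hence $\pi_L$ is a genuine permutation of $L$ points and its $\ell$-cycle counts are exactly the numbers of length-$\ell$ stretches starting above $s$. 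I would record this combinatorial identification first, since it is what ties the cycle counts to the Poisson structure.

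Next I would show $(L,\pi_L) \deq (N(\theta,p),\pi)$, the pair in the corollary. The engine is the change-of-measure identity established in the proof of Theorem \ref{thm:np}, namely $\P_\theta(A_n, T_s = n+1) = s^{\theta-1}\,\E_{\P_1}[\mathbf{1}(A_n, T_s = n+1)\,\theta^{K_n}]$ for $\mathcal{F}_n$-measurable $A_n$. Under $\P_1$ the $U_i$ are i.i.d.\ uniform, so $\{T_s = n+1\}$ factors into $\{U_1,\dots,U_n > s\}$ (probability $(1-s)^n$, after which the points are uniform on $[s,1]$) and the independent event $\{U_{n+1}\le s\}$ (probability $s$); moreover on $\{U_1,\dots,U_n>s\}$ the number of lower records $K_n$ has the record statistics of a uniform random permutation, with $\E[\theta^{K_n}] = (\theta)_n/n!$ since the indicators are independent Bernoulli$(1/i)$. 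Taking $A_n$ trivial gives $\P(L=n) = s^{\theta}(1-s)^n (\theta)_n/n!$, the negative binomial$(\theta,p)$ law at $s=p$. Taking $A_n = \{\pi_n = \sigma\}$ and using $\P_1(\pi_n = \sigma) = 1/n!$ together with $K_n = K(\sigma)$ gives, after dividing by $\P(L=n)$, the conditional law $\P(\pi_L = \sigma \giv L = n) = \theta^{K(\sigma)}/(\theta)_n$, which is exactly Ewens$(\theta)$.

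Finally I would invoke the Poisson description from Theorem \ref{thm:np}: under $\P_\theta$ the inter-record stretches starting above $s$ form a Poisson point process of intensity $\theta\sum_\ell (P_\ell)_{|[s,1]^\ell}/\ell$, so pushing forward to lengths, the counts of length-$\ell$ stretches above $s$ are independent Poisson with means $\theta P_\ell([s,1]^\ell)/\ell = \theta(1-s)^\ell/\ell$. Since these counts are the $\ell$-cycle counts of $\pi_L$ and $(L,\pi_L)\deq(N(\theta,p),\pi)$, the cycle counts of $\pi$ are independent Poisson with means $\theta(1-p)^\ell/\ell$ (so the stated $(1-p)^\ell/\ell$ should carry the factor $\theta$, reducing to the Shepp--Lloyd value precisely when $\theta=1$, where $N$ is geometric$(p)$).

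The main obstacle I anticipate is the distributional matching in the second step: establishing that conditioning on $\{T_s = n+1\}$ leaves $\pi_n$ Ewens-distributed even though that event constrains the \emph{values} of the $U_i$. The point is that the Radon--Nikodym density \eqref{rnd1} factors into a function of the relative order ($\theta^{K_n}$) times a function of the order statistics ($\min(\cdot)^{\theta-1}$), so that relative order and order statistics remain independent under $\P_\theta$, while under $\P_1$ the event $\{T_s=n+1\}$ is measurable with respect to the order statistics of $(U_1,\dots,U_n)$ together with $U_{n+1}$, hence independent of the relative order that determines $\pi_n$. A secondary technical point is verifying that the last stretch above $s$ closes exactly at index $L$, so that the stretches above $s$ genuinely form a permutation of $\{U_1,\dots,U_L\}$ with no dangling partial cycle.
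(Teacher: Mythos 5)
Your proposal is correct and follows the same overall architecture as the paper's proof: realize the Shepp--Lloyd permutation by running the record construction under $\P_\theta$ and stopping at $T_p$, use the factorization of the density $d\P_\theta/d\P_1$ into a function of the relative order times a function of the order statistics to get the Ewens$(\theta)$ conditional law given the size, and read off the cycle counts from the Poisson point process of inter-record stretches starting above $p$, with mean $\theta\mu([p,1]^\ell)=\theta(1-p)^\ell/\ell$. The one place you genuinely diverge is the law of the size $L=T_p-1$: the paper argues that, for fixed $p$, the size is the marginal of a L\'evy process in $\theta$ (since the length point process has intensity proportional to $\theta$), as is the negative binomial family, and so reduces to checking the geometric law at $\theta=1$; you instead compute $\P_\theta(T_p=n+1)=p^\theta(1-p)^n(\theta)_n/n!$ directly from the change-of-measure identity together with Feller's generating function $\E_1\theta^{K_n}=(\theta)_n/n!$ (equation \eqref{pgf}). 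Your route is more elementary and self-contained; the paper's buys a conceptual link to infinite divisibility that it then comments on via \eqref{lkrepgeo}. Finally, you are right that the Poisson parameter in the statement should be $\theta(1-p)^\ell/\ell$ rather than $(1-p)^\ell/\ell$; the paper's own proof derives the factor $\theta$, and the stated form is a typo that is only consistent with the case $\theta=1$.
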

\begin{proof}
Let us consider, under $\mathbb{P}_{\theta}$, the permutation $\pi_n$ of random order,  $n+1$ being the first index such that $U_{n+1} < p$. 
If we condition on the value of this index and on the order statistics of $(U_1, \dots, U_n)$, we get, from the expression of the density $d \P_{\theta}/d \P_1$, 
a permutation $\pi_n$ following Ewens distribution of parameter $\theta$. 
On the other hand, Theorem \ref{thm:np} implies that the number of cycles of different sizes in $\pi_n$ are independent Poisson variables, the expectation of the 
number of $\ell$-cycles being: 
$$\theta \mu ([p,1]^{\ell}) = \theta (1-p)^{\ell}/ \ell.$$
Hence, the corollary is proven if we show that the law of the size $n$ of the permutation is negative binomal $(\theta,p)$. Since the cycle lengths form a Poisson process with intensity proportional to $\theta$ when $p$ is fixed, 
the law of the size of the permutation in function of $\theta$ corresponds to the marginals of a L\'evy process. It is also the case for the negative binomial distribution, so it is 
enough to check that for $\theta = 1$, $n$ is geometrically distributed with parameter $p$. This fact is immediate since $n+1$ is the first time when an i.i.d. sequence of uniform variables on $[0,1]$ hits the interval $[0,p]$. 
\end{proof}
The proof above is related to the fact that if $K_{\ell}(\pi)$ is the number of $\ell$-cycles of $\pi$, 
\begin{equation}
\label{lkrepgeo}
N(\theta, p) = \sum_{\ell = 1}^\infty \ell K_{\ell}(\pi)
\end{equation}
is the canonical L\'evy decomposition of the infinitely divisible distribution of $N(\theta,p)$ as a linear combination of independent Poisson variables.
Compare with the discussion of Feller 
\cite[(2.17)]{MR0228020} who gives the well-known probability generating function of the number $K$ of cycles of a uniform random permutation $\pi_n$ of $[n]$:
\begin{equation}
\label{pgf}
\E_1 \theta^{K} = \frac{ (\theta)_n } {n! }
\end{equation}
by use of his coupling with Bernoulli$(1/i)$ variables for $1 \le i \le n$.
This comes immediately after discussion of the compound Poisson representation of the negative binomial distribution, but without 
the connection indicated in Corollary \ref{41}. This model for constructing a negative binomial variable from independent Poisson counts of cycles of
a random permutation of random size is also not mentioned in the otherwise very comprehensive account \cite{MR2032426} of models related to the Ewens sampling formula.

\providecommand{\bysame}{\leavevmode\hbox to3em{\hrulefill}\thinspace}
\providecommand{\MR}{\relax\ifhmode\unskip\space\fi MR }
\providecommand{\MRhref}[2]{%
  \href{http://www.ams.org/mathscinet-getitem?mr=#1}{#2}
}
\providecommand{\href}[2]{#2}

\bibliographystyle{plain}
\bibliography{cycles}

\end{document}